\newtheorem{theorem}{Theorem}
\newtheorem{theoremc}{Theorem}
\newtheorem{rk}[theoremc]{Remark\!}
\newtheorem{lem}[theoremc]{Lemma\!\!}
\newcommand\bib[1]{\bibitem[#1]{#1}}
\renewcommand\a{\alpha}
\newcommand\com[1]{}
\newcommand\C{{\mathbb C}}
\newcommand\Cc{{\let\mathcal\mathscr\mathcal C}}
\renewcommand\d{\delta}
\newcommand\E{\mathcal{E}}
\newcommand\oo{\omega}
\newcommand\op[1]{\mathop{\rm #1}\nolimits}
\newcommand\ot{\otimes}
\newcommand\p{\partial}
\newcommand\R{{\mathbb R}}
\newcommand\we{\wedge}
\begin{document}

 \title[Differential Invariants and Symmetry]{Differential Invariants and Symmetry:\\
 Riemannian metrics and beyond}
 \author{Boris Kruglikov}
 \date{}
 \address{Institute of Mathematics and Statistics, University of Troms\o, Troms\o\ 90-37, Norway.
\quad E-mail: {\tt boris.kruglikov@uit.no}. }
 \keywords{Differential Invariants, Invariant Derivations, Riemannian and pseudo-Riemannian metrics, Killing fields}

 \vspace{-14.5pt}
 \begin{abstract}
We discuss Lie-Tresse theorem for the pseudogroup of diffeomorphisms acting on the space of (pseudo-)Riemannian metrics, and relate this to existence of Killing vector fields. Then we discuss the impact
of symmetry in the general case.
 \end{abstract}

 \maketitle

\section*{Introduction}

Consider a pseudogroup $G$ acting on a differential equation $\E\subset J^\infty(E,n)$, where
the latter denotes the space of jets of dimension $n$ submanifolds $M\subset E$. As a particular important case, we
consider the action on the full/un-constrained space of jets, which includes the action on the
space of geometric structures of a prescribed type on a manifold $M$ of dimension $n$.

Let $s_k$ be the number of independent differential invariants of order $\le k$, which is the codimension
of the general orbit of $G$ in $\E^k\subset J^k$ (the respective jet-space), and let
$\d_k=s_k-s_{k-1}$ be the number of differential invariants of order $k$. The Poincar\'e function
(at a Zariski generic point $q_\infty\in\E$) of the $G$-moduli in $\E$ is the series
 $$
P^\E_G(z)=\sum\d_kz^k.
 $$
It follows from \cite{KL$_2$} that for algebraic actions transitive on the base $J^0=E$ of $\E$ the function
$P(z)$ is rational with the pole at $z=1$ of degree $d$, where $d$ does not exceed
the number of independent invariant derivations involved in the Lie-Tresse representation of the algebra of
(rational-polynomial) differential invariants (and also $d\le$ dimension of the affine characteristic variety
of $\E$, which does not exceed $\dim M$). 

In this paper we will discuss relation of the algebra $\mathfrak{A}$ of differential invariants to symmetries of the
considered equations/geometric structures. The well-studied example, which we discuss in details, is the space
of (pseudo-)Riemannian metrics.

Differential invariants of Riemannian metrics is a classical heritage coming back to Riemann. In his inaugural lecture
\cite{R} he argued for existence of differential invariants when $n\ge2$ and identified the fundamental tensor
invariant -- the curvature $R\in\Gamma(\Lambda^2T^*M\ot\op{End}(TM))$. His argument for existence of invariants
is the functional dimension count: $\binom{n+1}2$ coefficients of the metric minus $n$ functions in the
general change of coordinates yields $\binom{n}2$ scalar invariants of the problem. Riemann alluded also to the form of
these differential invariants, which after proper understanding 
can be identified with the spectrum of the curvature operator $R\in\Gamma(\op{End}(\Lambda^2TM))$.

We will see below that there are more independent scalar differential invariants even of order 2 (for $n>3$;
or of higher order for the general $n>1$). Absence of contradiction with the naive functional dimension count
is due to existence of differential syzygies (Bianchi identities).

Existence of Killing vector fields of a given Riemannian metric $g$ can be seen from the restriction of scalar
differential invariants to the base manifold (realized as the jet-lift via the metric $g$ to $J^\infty$), and we
will demonstrate that this is also the case with the general equations/geometric structures $\E$ acted upon by a
Lie pseudogroup $G$. The notion of regularity of geometric structure (in this context it means separability
via differential invariants) plays the crucial role (as it does in the study of pseudo-Riemannian vs.\ Riemannian
structures). We will also see that existence of symmetries bounds the growth of differential invariants
via the order of pole in the Poincar\'e function.

\section{Background: Lie-Tresse theorem}

Let us recall Lie-Tresse theorem on finite-generation of the (generally infinite) algebra of
differential invariants of a Lie pseudogroup $G$, which in premature formulation belongs to \cite{Tr}.

The global algebraic statement is due to \cite{KL$_2$} and sounds as follows
(see also \cite{OPV} and references therein for other approaches).

Consider a transitive action of a Lie pseudogroup $G$ on a manifold $M$ such that its prolongation is an
algebraic action on a formally integrable irreducible differential equation $\E$ over $M$.

 \begin{theorem}[\cite{KL$_2$}]
There exists a number $l$ and a Zariski closed invariant proper subset $S_l\subset\E^l$ such that
the action is regular in $\pi_{\infty,l}^{-1}(\E^l\setminus S_l)\subset\E$, so that there exists a
rational geometric quotient
 $$
\bigl(\E^k\setminus\pi_{k,l}^{-1}(S_l)\bigr)/G^k\simeq Y_k,\quad \dim Y_k=s_k,\quad \forall k\ge l.
 $$
The algebra $\mathfrak{A}^l$ of differential invariants on $\E$, which are rational by jet-variables
up to order $l$ and polynomial by the jet-variables of higher order,
separates the regular orbits from $\E\setminus\pi_{\infty,l}^{-1}(S_l)$.

There exists a finite number of functions $I_1,\dots,I_t\in\mathfrak{A}^l$ and a finite number
of rational invariant derivations $\nabla_1,\dots,\nabla_s:\mathfrak{A}^l\to\mathfrak{A}^l$
such that any function from $\mathfrak{A}^l$ is a polynomial of differential
invariants $\nabla_{j_1}\cdots\nabla_{j_r}I_i$ with coefficients being rational functions of $I_i$.
 \end{theorem}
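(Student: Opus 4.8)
The plan is to treat the three assertions of the statement — existence of a rational geometric quotient, separation of regular orbits by $\mathfrak{A}^l$, and finite generation in the Lie--Tresse form — in that order, reducing the infinite-dimensional pseudogroup problem to a coherent tower of finite-dimensional algebraic quotient problems. First I would fix the algebraic structure: since the prolongation of $G$ acts algebraically on the formally integrable irreducible equation $\E$, truncation at order $k$ gives an algebraic action of a finite-dimensional algebraic group $G^k$ on the irreducible variety $\E^k$, and transitivity on the base $J^0=E$ lets me pass to a fibre and work with an affine algebraic action. To this I would apply Rosenlicht's theorem on rational quotients: for each $k$ there is a $G^k$-invariant Zariski-dense open set carrying a geometric quotient $Y_k$ with $\dim Y_k=s_k$, on which the rational invariants separate orbits. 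The real content of the first claim is then the \emph{coherence} of these quotients across orders — producing a single level $l$ and a Zariski closed invariant proper set $S_l\subset\E^l$ so that the regular locus at every $k\ge l$ is exactly $\pi_{k,l}^{-1}(\E^l\setminus S_l)$.

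To establish this coherence I would use that formal integrability makes each $\pi_{k+1,k}:\E^{k+1}\to\E^k$ an affine bundle modeled on the symbol $\fg_{k+1}$, and that for large $k$ the symbols stabilize: the prolongations become surjective and the tableau involutive. Over this stable range the orbit codimension increments $\d_k$ are controlled by the reduced symbol and hence governed by the Hilbert polynomial of the (affine) characteristic variety, so the singular locus cannot grow with $k$ but is pulled back from level $l$. This fixes $S_l$ and the family $Y_k$ simultaneously and, together with Rosenlicht separation at level $l$, proves that $\mathfrak{A}^l$ separates the regular orbits.

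For the generation statement I would first construct the invariant derivations: taking differential invariants whose total (horizontal) differentials span an invariant coframe over the regular locus and inverting the resulting matrix produces the rational invariant derivations $\nabla_1,\dots,\nabla_s$ (the Tresse derivatives), which carry $\mathfrak{A}^l$ into itself and preserve rationality up to order $l$ and polynomiality above. The heart is then finite generation, which I would attack by passing to principal symbols: the symbol of an order-$k$ differential invariant lives in a space dual to the symbolic module, and invariant differentiation acts there as multiplication by coordinate covectors over the polynomial ring of the characteristic variety. Hilbert's basis theorem yields finitely many module generators, which lift to finitely many invariants $I_1,\dots,I_t\in\mathfrak{A}^l$; an induction on order then expresses every order-$k$ invariant as an iterated $\nabla$-derivative of lower-order ones up to an already-generated correction, with the coefficients being rational in the $I_i$ thanks to Rosenlicht at level $l$.

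The step I expect to be the main obstacle is precisely this last one — matching iterated invariant differentiation with multiplication in the Noetherian symbolic module while controlling the lower-order correction terms \emph{uniformly} over the whole regular locus, rather than at a single generic point. The coherence and stabilization of the singular sets $\pi_{k,l}^{-1}(S_l)$ across all orders (the passage from pointwise Rosenlicht quotients to a genuine family over $\E\setminus\pi_{\infty,l}^{-1}(S_l)$) is the secondary technical difficulty, and both hinge on the same structural input: the eventual regularity and involutivity of the symbols governing $\E$.
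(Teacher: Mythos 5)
This theorem is quoted in the paper from \cite{KL$_2$} without an internal proof, so the only meaningful comparison is with the argument in that reference; measured against it, your outline follows essentially the same route: Rosenlicht quotients for the algebraic actions of $G^k$ on $\E^k$ (reduced to a fibre by transitivity on the base), stabilization of the regular locus using the affine-bundle structure of the projections $\pi_{k+1,k}$ and eventual involutivity of the symbols, Tresse derivatives obtained by inverting horizontal differentials of invariants, and finite generation via Noetherianity of the symbolic module under the multiplicative action of invariant differentiation. You have also correctly isolated the two places where the real technical work of \cite{KL$_2$} is concentrated, namely the coherence of the singular sets $\pi_{k,l}^{-1}(S_l)$ across all orders and the uniform control of lower-order correction terms over the whole regular locus rather than at a single generic point.
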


In the next section we will show specification of this theorem for Riemannian metrics.
Classification of all scalar differential invariants implies recognition problem
for (Zariski generic) Riemannian metrics.

The equivalence problem (covering all metrics) is quite different:
Following E.\,Cartan's method of equivalence \cite{C}, one associates the oriented orthonormal frame bundle
$\mathcal{F}_M$ over $M$. This is a principal $SO(n)$-bundle with the Cartan connection
$(\theta,\oo)\in\Omega^1(\mathcal{F}_M,\mathfrak{euc}(n))$, where
$\mathfrak{euc}(n))=\R^n\rtimes\mathfrak{so}(n)$ is the Lie algebra of the Euclidean motion group.
The structure equations
 $$
d\theta+\omega\we\theta=0,\quad d\omega+\omega\we\omega=R\,\theta\we\theta
 $$
carry the fundamental invariant -- the curvature tensor $R$, as well as the canonical parallelism
resolving the equivalence for metrics.

They also encode the complete set of scalar differential invariants, the so-called Weyl invariants.
These are obtained from the tensor products of the (Levi-Civita) covariant derivatives of $R$ by total contractions.
The fundamental theorem \cite{W} states that all scalar differential invariants that are polynomial in
jets of order $>0$ are linear combinations of Weyl invariants. Application of this to recognition problems
for metrics is given in \cite[Note 19]{KN}.

Note however that this approach does not give the number of independent Weyl invariants, nor their form
(i.e. prescription which contractions shall be considered). Explicit description of scalar differential
invariants will be discussed in the next section.

\section{Riemannian metrics: Computing the invariants}
\label{S2}

Let $E=S^2_\text{ndg}T^*M$ be the fiber bundle of non-degenerate quadratic forms over $M$.
Its space of sections consists of pseudo-Riemannian metrics on $M$.
If we change to positive definite forms, we get Riemannian metrics, and we will not make distinction
between them in this section.

The pseudogroup $G=\op{Diff}_\text{loc}(M)$ of diffeomorphisms is algebraic and
acts transitively on the base $M$. Its differential invariants in $J^\infty E$ are invariants of metrics.
Thus our Lie-Tresse theorem (and also \cite{MM$_2$}) implies existence of the generating set of invariants separating
generic metrics (i.e.\ with a Zariski generic jet; this does not concern all Riemannian metrics, for instance,
there are VSI metrics in the general relativity that are non-closed orbits of the action, see \cite{H}).

The number of independent differential invariants $s_k$ of order $\le k$ was computed by T.\,Thomas
\cite[\S75]{Th}, see also \cite{MM$_1$}: there are indeed no invariants on the first jets $s_0=s_1=0$ and
 $$
s_k=\left\{
\begin{array}{ll}
\frac{(k+1)(k-2)}2+\delta_{k,2}, & \text{ for }n=2,\vphantom{\frac{\frac22}{2^2}}\\
n+\frac{n^2(k-1)-n(k+1)}{2(k+1)}\cdot\binom{n+k}k, & \text{ for }n>2.\vphantom{\frac{\frac22}{2^2}}
\end{array}
\right.
 $$
The proof is based on the fact that the action of the differential group $G_x^{k+1}$ of order $k+1$
(consisting of jets of diffeomorphisms preserving $x$) on the space of jets $J^k_xE$ over $x\in M$
is free for $k\ge3$ when $n=2$ and for $k\ge2$ when $n\ge3$, thus implying the claim. This in turn yields the
number of scalar invariants of pure order $k$:
$\d_0=\d_1=0$,
 $$
\d_k=\left\{
\begin{array}{ll}
k-1-\delta_{k,3}, & \text{ for }n=2,\vphantom{\frac{\frac22}{2^2}}\\
n+\frac{(n+2)(n+1)n(n-3)}{12}, & \text{ for }k=2,n>2,\vphantom{\frac{\frac22}{2^2}}\\
\frac{n(k-1)}2\cdot\binom{n+k-1}{k+1}, & \text{ for }k>2,n>2.\vphantom{\frac{\frac22}{2^2}}\end{array}
\right.
 $$
Thus the Poincar\'e function is equal to
 $$
P(z)=\left\{
\begin{array}{ll}
\frac{z^2(1-z+2z^2-z^3)}{(1-z)^2}, & \text{ for }n=2,\vphantom{\frac{\frac22}{\frac22}}\\
\frac{n}z+\binom{n}2\cdot(1-z^2)-\frac1{(1-z)^n}\cdot\bigl(\frac{n}z-\binom{n+1}2\bigr), & \text{ for }n>2
\vphantom{\frac{\frac22}{\frac22}}
\end{array}
\right.
 $$
(NB: there are no pole at $z=0$ in the last rational function).
Notice also that the generating function of $s_k$ is $Q(z)=\sum s_kz^k=P(z)/(1-z)$.

The explicit form of differential invariants for $n=2$ was written down in \cite{K$_1$}. Let us discuss the case $n\ge3$. We begin with the order $k=2$. In this case the fundamental tensorial invariant of metric $g$ is the curvature tensor
$R=R_g$, having orthogonal decomposition $R=\op{Ric}+W$ into Ricci (including trace) and Weyl parts.

The Ricci tensor $\op{Ric}\in\Gamma(S^2T^*M)$ gives $n$ invariants as follows. Construct the Ricci operator
 $$
A=g^{-1}\op{Ric}:TM\to TM
 $$
and take its traces $I_i=\op{Tr}(A^i)$, $i=1,\dots,n$. For generic $g$ these are $n$ independent
functions forming coordinates on $M$.

 \begin{rk}
It was noticed by V.Lychagin and A.Kotov that the invariants $I_1,\dots,I_n$ suffice for classification of generic
Riemannian metrics due to the principle of $n$-invariants as discussed in \cite{ALV}.
 \end{rk}

Alternatively, one can take eigenvalues of $A$ for $I_i$. Though the first approach is preferable due to rational output
(we have to allow algebraic extension when computing eigenvalues), let us proceed for an instant with the second
approach. Denote the eigenvectors of $A$ by $e_i$ (for generic $g$ the operator $A$ is semi-simple) and take the dual co-frame $\theta^i$ (these are vector fields in total derivatives and horizontal differentials respectively).
Then we can decompose $W=W^i_{jkl}e_i\ot\theta^j\ot\theta^k\we\theta^l$, and take the components $W^i_{jkl}$ as the other scalar invariants of $R$. These together with $I_i$ form the complete set of invariants of order 2.

To skip using the algebraic extensions in finding scalar invariants of order $k=2$ we proceed as follows.
For $n=3$ the Riemannian curvature is encoded by the Ricci tensor, and $I_1,I_2,I_3$ are
the only invariants. For $n\ge4$ there is also Weyl tensor, viewed as a map $W:\Lambda^2TM\to\Lambda^2TM$.
Introducing the operators
 $$
W^{a,b,c}=A^a\circ W^b\circ A^c:\Lambda^2TM\to\Lambda^2TM
 $$
we can take the traces $\op{Tr}(W^{a,b,c})$ as
differential invariants $J_1,\dots,J_r$, $r=\frac1{12}(n+2)(n+1)n(n-3)$ of order 2
(we can restrict to $1\le a,c\le n$ and $1\le b\le\binom{n}2$, these traces contain $r$ independent
invariants, and there will be some relations with the others). Now $\{I_1,\dots,I_n,J_1,\dots,,J_r\}$ form a
complete set of invariants of order $k=2$.

To obtain differential invariants of order $k>2$ let us consider the horizontal differentials
$\hat d I_1,\dots,\hat d I_n$. The dual basis of vector fields in total derivatives
$\nabla_i=\hat\p/\hat\p I_i$, $i=1,\dots,n$, are called Tresse derivatives \cite{KL$_1$}.
The basis of higher order scalar invariants is contained in the set
$(\nabla_{i_1}\dots\nabla_{i_{k-2}}R)(A^{s_1}\nabla_{j_1},A^{s_2}\nabla_{j_2},A^{s_3}\nabla_{j_3},A^{s_4}\nabla_{j_4})$,
$1\le i_p,j_q,s_t\le n$.
The Bianchi identities (and their covariant derivatives) yield the relations among these invariants.

\section{Differential invariants and symmetries: metric case}
\label{S3}

In the case of (pseudo-)Riemannian structures the $G$-orbits through $q_\infty\in\E$ with
non-trivial stabilizers are not regular\footnote{There are several different notions of regularity,
see Section \ref{S3+}.} (this means the property of having a Killing field is not generic).
The same holds for general $G$-actions, with the freeness property as in the previous section.

Consider restrictions of differential invariants to the structure, viewed as
the jet-lift of $M$ to the space of infinite jets, so that they become functions on the base $M$.
We begin with the metric case, and denote this restriction of the algebra $\mathfrak{A}\subset C^\infty(J^\infty E)$
to $(M,g)$ by $\mathfrak{A}_g\subset C^\infty(M)$.

It is clear that the algebra $\mathfrak{A}_g$ is annihilated by any symmetry (Killing vector field) of the
metric structure $g$. For Riemannian metrics one can obtain the converse.
The following result is a refinement of Singer's theorem \cite{Si} (which concerns tensorial invariants $\nabla^sR$)
characterizing existence of a (local) transitive isometry group of $(M,g)$.

 \begin{theorem}[\cite{PTV}]\label{Thm2}
A Riemannian manifold $(M^n,g)$ is locally homogeneous iff all scalar Weyl invariants of order
$s\le\binom{n}2$ are constant.
 \end{theorem}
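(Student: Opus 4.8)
The plan is to characterize local homogeneity through the infinitesimal structure captured by the curvature and its covariant derivatives, following Singer's framework, but upgraded to scalar invariants. First I would recall the tensorial version (Singer's theorem): $(M,g)$ is locally homogeneous iff it is \emph{$k$-curvature homogeneous} for $k$ equal to the Singer number $k_M$, meaning that for any two points there is a linear isometry of the tangent spaces matching the tensors $R,\nabla R,\dots,\nabla^{k_M}R$. The key quantitative input is that the Singer number is bounded: the construction produces a descending chain of isotropy subalgebras $\mathfrak{h}_0\supseteq\mathfrak{h}_1\supseteq\cdots$ inside $\mathfrak{so}(n)$, where $\mathfrak{h}_s$ is the stabilizer of $(R,\dots,\nabla^sR)$ in the orthogonal group, and this chain must stabilize after at most $\dim\mathfrak{so}(n)=\binom{n}2$ steps; hence $k_M\le\binom{n}2$. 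This is exactly why the bound $s\le\binom{n}2$ appears in the theorem.

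Next I would translate between the tensorial statement and the scalar statement. The easy direction is immediate: if $(M,g)$ is locally homogeneous then every scalar Weyl invariant, being a diffeomorphism invariant built by total contractions from $\nabla^sR$, is constant along the transitive local isometry pseudogroup, hence constant. For the converse I would argue that the scalar Weyl invariants of order $\le\binom{n}2$ determine the orbit type of the tensor tuple $(R,\nabla R,\dots,\nabla^{\binom{n}2}R)$ under the orthogonal group $O(n)$ acting pointwise. The classical invariant-theoretic fact here (a consequence of Weyl's theorem on $O(n)$-invariants, cited as \cite{W} in the excerpt) is that the algebra of $O(n)$-invariant polynomials in such tensors is generated by the total contractions, i.e. by the Weyl invariants. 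Therefore constancy of all these scalars forces the $O(n)$-orbit of the curvature tuple to be independent of the point, which is precisely $k$-curvature homogeneity up to order $\binom{n}2$. Invoking Singer's theorem with $k_M\le\binom{n}2$ then yields local homogeneity.

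The hard part will be the invariant-theoretic separation of orbits in the \emph{Riemannian} (compact structure group $O(n)$) setting, where one must pass from \emph{generating} the invariant ring to actually \emph{separating} orbits. For a compact group acting linearly the polynomial invariants do separate orbits, so this is available for $O(n)$; this is also where positive-definiteness is essential and where the pseudo-Riemannian case genuinely fails, matching the excerpt's emphasis on regularity and the Riemannian-versus-pseudo-Riemannian distinction. A secondary technical point is bookkeeping of orders: one must check that the relevant tensors $\nabla^sR$ for $s\le k_M$ are reconstructible from Weyl invariants of \emph{scalar} order exactly $s\le\binom{n}2$, i.e. that no higher covariant derivatives are needed and that the degree count in the contractions stays within the stated range. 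I would handle this by the descending-chain argument above, which guarantees that once the isotropy stabilizes, no further derivatives add information.

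Finally, I would assemble the two directions: the forward implication is the trivial constancy observation, and the reverse implication chains \emph{Weyl's first fundamental theorem for $O(n)$} (generation and, by compactness, separation of orbits) with \emph{Singer's theorem} (curvature homogeneity up to the Singer bound implies homogeneity). The novelty over Singer is purely the reduction from tensorial to scalar data, so the proof is essentially a dictionary between the two, with the compact invariant theory of $O(n)$ supplying the bridge.
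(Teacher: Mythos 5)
The paper itself contains no proof of this statement---it is quoted verbatim from \cite{PTV}---and your reconstruction is essentially the argument of that source: constancy of the scalar Weyl invariants forces pointwise $O(n)$-equivalence of the tuples $(R,\nabla R,\dots)$, because the invariant polynomials of the compact group $O(n)$ are both generated by complete contractions (Weyl's first fundamental theorem, \cite{W}) and separate orbits, and then Singer's theorem with the stabilizer-chain bound in $\mathfrak{so}(n)$ of dimension $\binom{n}2$ yields local homogeneity. Your sketch is correct in outline and matches the cited proof, including the correct identification of compactness of $O(n)$ as the place where positive-definiteness enters and the pseudo-Riemannian case fails; the only residual imprecision is bookkeeping (Singer's theorem requires curvature homogeneity up to order $k_M+1$ rather than $k_M$), which your descending-chain argument is the right mechanism to absorb.
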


Here is a generalization due to Console and Olmos about existence of some Killing fields.
Recall that homogeneity is the dimension of the regular orbit in $M$ of the full isometry group of $g$.

 \begin{theorem}[\cite{CO}]\label{Thm3}
The homogeneity of a Riemannian manifold $(M,g)$ is equal to dimension of a generic
level set of the Weyl invariants.
 \end{theorem}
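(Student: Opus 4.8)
The plan is to prove Theorem~\ref{Thm3} by exhibiting the level set of the Weyl invariants as the orbit foliation of the full isometry group, in the region where the structure is regular. Let $k=\dim(\text{generic level set})$ and let $h$ denote the homogeneity, i.e.\ the dimension of the regular orbit of the isometry group. The inequality $k\ge h$ is the easy direction: since every scalar Weyl invariant $F\in\mathfrak{A}_g$ is constant along any isometry orbit (isometries annihilate $\mathfrak{A}_g$, as noted before the statement of Theorem~\ref{Thm2}), each orbit is contained in a single level set, whence $\dim(\text{orbit})\le\dim(\text{level set})$, that is $h\le k$.

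The substance is the reverse inequality $h\ge k$, for which I would argue that nearby points lying on the same level set of all Weyl invariants are actually related by a local isometry. First I would fix a Zariski-generic point $x_0$ where the action is regular in the sense of Theorem~1, so that the finite generating set of scalar invariants $I_1,\dots,I_t$ together with their Tresse/invariant derivatives $\nabla_{j_1}\cdots\nabla_{j_r}I_i$ separates regular orbits in the jet space. The key point is that these generators are precisely (rational combinations of) the Weyl invariants $\op{Tr}(\nabla^s R\text{-contractions})$, so that the common level set of the finitely many invariants of order $s\le\binom{n}2$ coincides locally with the common level set of the \emph{entire} algebra $\mathfrak{A}_g$ --- here one invokes the Singer/PTV bound $s\le\binom{n}2$ from Theorem~\ref{Thm2}, which guarantees that no new independent invariant appears beyond that order on the level set in question. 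Given two points $x_0,x_1$ on the same level set, all jets of all Weyl invariants agree, hence by the Lie--Tresse separation the corresponding jets of $g$ lie on the same $G$-orbit in $J^\infty_xE$; unwinding the definition of the $G=\op{Diff}_\text{loc}(M)$ action, this produces a local diffeomorphism carrying the $\infty$-jet of $g$ at $x_0$ to that at $x_1$, i.e.\ a germ of local isometry. Integrating this matching of jets along the level set (a connectedness/monodromy argument on the regular locus) yields a local isometry whose orbit through $x_0$ fills out the level set, giving $\dim(\text{orbit})\ge k$, hence $h\ge k$.

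Combining the two inequalities gives $h=k$ on the regular locus, and since both quantities are computed at a generic point the equality is the asserted one. The main obstacle I expect is the passage from ``all Weyl invariants of bounded order $s\le\binom{n}2$ agree at the two points'' to ``the full $\infty$-jets of the metrics are $G$-equivalent'': this is exactly where the finiteness in Singer's theorem must be leveraged, since a priori $\mathfrak{A}_g$ involves invariants of all orders, and one must know that agreement up to the Singer order already forces agreement of the whole infinite jet (equivalently, that the stabilization of the isotropy algebra has occurred by order $\binom{n}2$). A secondary technical point is the careful restriction to the regular stratum $\E\setminus\pi_{\infty,l}^{-1}(S_l)$ of Theorem~1, so that the geometric quotient exists and the level-set-to-orbit identification is clean; the genericity of $x_0$ and the semisimplicity of the Ricci operator $A$ used in constructing the separating invariants $I_i$ must be arranged to hold on a neighborhood of the level set under consideration.
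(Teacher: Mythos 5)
The paper itself gives no proof of this theorem---it is quoted from Console--Olmos \cite{CO}---but your route has a genuine gap, and it is one that the paper's own surrounding text flags. Your core step, that two points on a common level set of the Weyl invariants have $G$-equivalent $\infty$-jets ``by the Lie--Tresse separation,'' is only available on the Zariski-open regular stratum of Theorem~1, and the metrics for which Theorem~\ref{Thm3} has any content lie outside that stratum: if the generic level set has dimension $k>0$, the conclusion asserts a $k$-dimensional isometry orbit, i.e.\ Killing fields, and as the paper states at the opening of Section~\ref{S3}, orbits with non-trivial stabilizers are \emph{not} regular---having a Killing field is not generic. So you cannot ``fix a Zariski-generic point $x_0$'' on such a metric: every point of $(M,g)$ lifts into the excluded set $\pi_{\infty,l}^{-1}(S_l)$, and the separation property is unavailable exactly where you invoke it; the argument is circular for the metrics the theorem is about. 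The auxiliary genericity you require (semisimple Ricci operator, the $I_i$ forming coordinates near the level set) fails identically in the interesting cases, e.g.\ for a space form all $I_i$ are constant. The decisive symptom: your proof nowhere uses positive definiteness, so if it were sound it would apply verbatim in Lorentzian signature---contradicting the Koutras--McIntosh example \cite{KM} cited immediately after the theorem, a $4$-dimensional metric with all Weyl invariants zero (so the generic level set is all of $M$, $k=n$) and no Killing field at all ($h=0$).

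What is missing is the specifically Riemannian separation input on which the Console--Olmos argument rests: because the structure group $O(n)$ is compact, its orbits on the spaces of tensors $(R,\nabla R,\dots,\nabla^sR)$ are closed and are therefore separated by polynomial invariants, so equality of all Weyl invariants at two points genuinely forces the corresponding jets (up to the Singer-type stabilization order) to be $O(n)$-equivalent---with no appeal to a generic stratum. Granting jet equivalence, the passage to an actual local isometry whose orbits fill the level sets is not a soft ``connectedness/monodromy argument'' but a Singer/Nomizu-type integration using the finite type of the Killing equation---in effect the mechanism of the paper's Theorem~\ref{Thm4}, whose proof via the Lie equation $\mathfrak{Lie}(q)$ and its Cartan--Kuranishi completion is the honest version of that step. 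Your two-inequality skeleton ($h\le k$ because invariants are constant on isometry orbits; $h\ge k$ via jet matching plus integration) is the right shape, but without the compactness/closed-orbit input the hard inequality is unproven; on the regular stratum, where your argument could be repaired, it only recovers a special case of Theorem~\ref{Thm4} (which carries the regularity hypothesis), not the strength of Theorem~\ref{Thm3}, whose whole point is validity for \emph{all} Riemannian metrics.
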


This latter theorem is true for all Riemannian metrics and regular pseudo-Riemannian metrics,
but it can fail for non-generic pseudo-Riemannian metrics.
Indeed, in \cite{KM} an example of a Lorenzian signature Einstein 4D metric is given with all Weyl invariants zero,
yet without any Killing vector field. This structure belongs to the singular set of metrics that are
not separated by curvature invariants.

\section{Differential invariants and symmetries: general case}
\label{S3+}

Let us consider now the general differential equation/space of geometric structures $\E$ acted upon by
a Lie pseudogroup $G$. Suppose a Lie (pseudo-)group $H\subset G$ acting on $M$ with regular orbits of $\dim=k$
is a symmetry of all solutions/structures from $\E'\subset\E$, i.e. there is a common stabilizer $H$ for any
$q\in\op{Sol}(\E')$; the latter mean the set of all holonomic sections $q=j_\infty(s)$ with the image in $\E'$
(we will assume for simplicity that $\E'$ is compatible/involutive and locally solvable near a generic point,
which always hold for analytic or finite type systems). Thus $q$ is either a section of a vector bundle $E$
subject to a differential constraint $\E$ (\cite{KL$_1$}) or a geometric structure
(usual $G$-structure or more general filtered geometric structure in the sense of \cite{Mo,K$_2$}).

 \begin{lem}
Invariant derivations in the direction of $\mathfrak{h}=\op{Lie}(H)$ act trivially on the space $\mathfrak{A}|_{\E'}$
of differential invariants restricted to $\E'$.
 \end{lem}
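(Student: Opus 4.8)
The plan is to reduce the statement to the elementary fact that an $H$-invariant function on $M$ is annihilated by every vector field tangent to the $H$-orbits, and to use the defining property of total (invariant) derivations: on a holonomic section they descend to genuine directional derivatives on the base. I interpret $\mathfrak{A}|_{\E'}$ as the algebra of restrictions to $\E'$ of differential invariants $I\in\mathfrak{A}$; since $\E'$ is assumed involutive and locally solvable near a generic point, its points are swept out by the jet-lifts $q=j_\infty(s)$ of structures $s$ from $\op{Sol}(\E')$, so a function on $\E'$ vanishes iff its pullback $I|_q:=(j_\infty s)^*I\in C^\infty(M)$ vanishes for every such $q$. I read an invariant derivation $\nabla$ ``in the direction of $\mathfrak{h}$'' as one whose horizontal symbol $\nabla^q$, the vector field on $M$ obtained by evaluating the jet-dependent coefficients of $\nabla$ along $q$, is tangent to the $H$-orbits, i.e. $\nabla^q\in\mathfrak{h}$ for every $q\in\op{Sol}(\E')$. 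Thus it suffices to prove $(\nabla I)|_q=0$ for all $I\in\mathfrak{A}$ and all $q\in\op{Sol}(\E')$.

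First I would establish the pointwise statement that each $I|_q$ is $H$-invariant. If $\phi_t$ denotes the flow of $X\in\mathfrak{h}$, then $H\subset G$ being a common symmetry of $\E'$ means $\phi_t^*s=s$, and the jet-lift intertwines the base flow with the prolonged action, $q\circ\phi_t=\phi_t^{(\infty)}\circ q$. Since $I$ is $G$-invariant, $I\circ\phi_t^{(\infty)}=I$, and therefore $I|_q\circ\phi_t=I\circ\phi_t^{(\infty)}\circ q=I\circ q=I|_q$. Differentiating at $t=0$ gives $X(I|_q)=0$ for every $X\in\mathfrak{h}$; equivalently $I|_q$ is constant along the regular, $k$-dimensional $H$-orbits.

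Next I would transfer derivations from jet space to the base. The key identity is that any total derivative $D_Y$ (for a base field $Y$) satisfies $(D_YI)|_q=Y(I|_q)$ on a holonomic section; writing $\nabla=\sum a_iD_{x_i}$ with the coefficients $a_i\in\mathfrak{A}^l$ and evaluating along $q$ yields $(\nabla I)|_q=\nabla^q(I|_q)$, where $\nabla^q=\sum a_i|_q\,\p_{x_i}$ is precisely the horizontal symbol. By hypothesis $\nabla^q\in\mathfrak{h}$, so combining with the previous step gives $(\nabla I)|_q=\nabla^q(I|_q)=0$ for every $I\in\mathfrak{A}$ and every $q\in\op{Sol}(\E')$, which is the asserted triviality on $\mathfrak{A}|_{\E'}$.

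The step I expect to require most care is not the computation but the bookkeeping that makes ``direction of $\mathfrak{h}$'' well posed: one must check that the horizontal symbol $\nabla^q$ of an invariant derivation is well defined and lies in $\mathfrak{h}$ simultaneously for all $q\in\op{Sol}(\E')$, so that the coefficients $a_i|_q$, themselves restrictions of differential invariants, vary with $q$ compatibly with the fixed distribution $\mathfrak{h}$ on $M$; and that the intertwining $q\circ\phi_t=\phi_t^{(\infty)}\circ q$ of jet-lift with the prolonged $G$-action is legitimate near a generic point. This is exactly where the standing assumptions that $\E'$ is involutive and locally solvable, and that the $H$-orbits are regular of constant dimension $k$, enter.
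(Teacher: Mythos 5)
Your proof is correct and follows essentially the same route as the paper's: you represent the invariant derivation by a horizontal field, restrict both the invariant and the field to $M$ via a holonomic section $q=j_\infty(s)$, and observe that $(\nabla I)|_q=\nabla^q(I|_q)$ vanishes because $I|_q$ is $H$-invariant while $\nabla^q$ is tangent to the $H$-orbits. You are in fact more explicit than the paper, which compresses the $H$-invariance of $I|_q$ and the tangency argument into ``the claim follows''; your only inessential slip is asserting that the coefficients $a_i$ of $\nabla$ lie in $\mathfrak{A}^l$, which is not needed (and not true in general for invariant horizontal fields), since the hypothesis $\nabla^q\in\mathfrak{h}$ already does all the work.
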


 \begin{proof}
Let $\nabla$ be an invariant derivation and $I$ a scalar differential invariant. By \cite{KL$_2$} $\nabla$ can be
represented by an invariant horizontal vector field $X$ (field in total derivatives) modulo non-invariant part that
acts trivially. Then the value of $\nabla I$ at $q_\infty\in\E'$ can be interpreted as follows \cite{KL$_1$}.
Choose a holonomic section $q$ representing $q_\infty$ at the point $q_0$ and restrict both $X$ and $I$ on $M$
embedded as the corresponding section in $\E'$; denote the resulting field and invariant by $\bar X$, $\bar I$.
We get: $(\nabla I)(q_\infty)=(X\cdot I)(q_\infty)=L_{\bar X}({\bar I})(q_0)$. The claim follows.
 \end{proof}

From this lemma we conclude that the order of pole of the Poincar\'e function $P_G^{\E'}(z)$ (counting the differential invariants $\mathfrak{A}|_{\E'}$) at $z=1$ satisfies $d\le n-k$.
We can interpret $d$ as the functional rank of the restriction $\mathfrak{A}_q$ of the algebra of differential
invariants to a generic solution/structure $q\in\op{Sol}(\E')$, then the inequality becomes evident.

By the functional rank above we mean the maximum $m$ such that the general stratum of the space $\mathfrak{A}_q$
is parametrized by some number of arbitrary functions of $m$ variables. The differential invariants are
governed by a certain differential equation (factor-equation of \cite{KL$_2$}), whose solution space is
controlled by the affine characteristic variety $\op{Char}^\C_\text{aff}$ of dimension $d$.
Symmetries (point symmetries in the case of a differential equation $\E$) give linear hyperplanes
containing $\op{Char}^\C_\text{aff}$, whence the restriction on $d$.

To the opposite side we have the following assertion. Call a solution/structure $q\in\op{Sol}(\E)$ regular if
its $\infty$-jet belongs to the Zariski open domain, where points are separated by differential invariants of
the pseudogroup $G$ (notice that this is a weaker notion than regularity of the orbit in the orbit space;
in particular we do not require that the orbit through $q$ in a respective jet-space has maximal dimension,
though of course to have separation property the orbit must be closed).

We call a solution/structure $q$ rigid if the Lie equation $\mathfrak{Lie}(q)$,
describing the symmetry of $q$ \cite{KS}, has finite type (for instance, pseudo-Riemannian metrics,
projective structures in any dimension and conformal structures in dimension $n>2$ are such).
In this case the maximal (local and global) symmetry group of $q$ is finite-dimensional.

 \begin{theorem}\label{Thm4}
Suppose that the solutions/structures of $\E$ are either rigid or that they are analytic\footnote{The former
is encoded in the formal geometry of $\E$, while the latter can be guaranteed by the Holmgren theorem \cite{Se}.}.
Then if the restriction $\mathfrak{A}_q$ of the algebra of scalar differential invariants $\mathfrak{A}$ to
a regular solution/structure $q$ has functional rank $m$, there exists a symmetry algebra of $q$
(possibly after shrinking the domain) with regular orbits on $M$ of $\dim\op{Sym}(q)=n-m$.
 \end{theorem}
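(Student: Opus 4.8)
The plan is to prove the converse direction to the Lemma. That result gives the easy inclusion: every orbit of $\op{Sym}(q)$ lies inside a common level set of $\mathfrak{A}_q$, so $\dim(\op{Sym}(q)\cdot x)\le n-m$. The content of the theorem is the opposite inequality, namely that the invariants having functional rank only $m$ \emph{forces} the existence of symmetries filling out the full $(n-m)$-dimensional level sets. This is exactly the mechanism behind Theorem \ref{Thm3} (Console--Olmos) for Weyl invariants, and I would run the same scheme, replacing the role played there by curvature invariants with the abstract separation property of the Lie--Tresse theorem. First, since the functional rank of $\mathfrak{A}_q$ equals $m$, near a generic regular point the $m$ functionally independent restricted invariants define a regular foliation of (an open dense subset of) $M$ whose leaves $L$ have dimension $n-m$; fix the leaf $L$ through $x$.

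The key step is to produce, for each $y\in L$ near $x$, a local symmetry of $q$ taking $x$ to $y$. By construction $y\in L$ means that every differential invariant takes equal values on the jets $q(x)$ and $q(y)$ in $\E$. Because $q$ is regular, both of these infinite jets lie in the Zariski domain on which $\mathfrak{A}$ separates the orbits; hence $q(x)$ and $q(y)$ lie in one and the same $G$-orbit. Equivalently there is a formal diffeomorphism $\phi$ with $\phi(x)=y$ carrying the germ of $q$ at $x$ to its germ at $y$ to all orders.

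The crux is to upgrade this formal equivalence to a genuine local symmetry, and it is precisely here that the rigid/analytic alternative is used. In the rigid case the symmetry equation $\mathfrak{Lie}(q)$ is of finite type, so after finitely many prolongations the local-equivalence problem for $q$ reduces (Cartan) to a completely integrable absolute parallelism whose structure functions are built from the differential invariants; their coincidence at $x$ and $y$ makes the parallelism match, and integrating the resulting Frobenius system produces a smooth local diffeomorphism $\phi_y$ with $\phi_y(x)=y$ and $\phi_y^*q=q$. In the analytic case the formal series defining $\phi$ is realized by an honest analytic map: the assumed compatibility/local solvability together with analyticity (guaranteed, where needed, by the Holmgren theorem cited in the footnote) lets the formal solution be promoted to a convergent one. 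Either way we obtain an $(n-m)$-parameter family of local symmetries $\{\phi_y\}_{y\in L}$ acting transitively on $L$ near $x$.

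It remains to pass from this family to a symmetry algebra with the asserted orbits. Choosing the family $\phi_y$ to depend smoothly (resp.\ analytically) on $y\in L$ and differentiating along the $n-m$ leaf directions yields $n-m$ infinitesimal symmetries of $q$ that are pointwise independent at $x$ and span $T_xL$. These generate a subalgebra of $\op{Sym}(q)$ (finite-dimensional in the rigid case); after shrinking to the open set where the rank is locally constant their orbits are regular of dimension $n-m$, and combined with the Lemma's reverse inequality this pins the orbit dimension to exactly $n-m$, as claimed. The main obstacle is the integration step: extracting an actual local symmetry, smoothly parametrized by $y$, from the purely jet-theoretic coincidence of invariants. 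This is the step that genuinely needs either finite type (to close up the equivalence into an integrable parallelism) or analyticity (to force convergence of the formal equivalence), and it is also where one must ensure the resulting family is regular enough that its leaf-derivatives furnish honest, pointwise independent symmetry vector fields.
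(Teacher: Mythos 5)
Your proposal is correct and follows essentially the same route as the paper's proof: regularity (separation by differential invariants) gives jet-level equivalence between points of a common level set of $\mathfrak{A}_q$, and then rigidity (finite type, Frobenius integration) or analyticity (involutivity after finitely many completion steps) upgrades this to genuine local symmetries whose orbits are the $(n-m)$-dimensional leaves. The only cosmetic difference is that the paper works directly with the Cartan--Kuranishi completion $\overline{\mathfrak{Lie}}(q)$ of the Lie equation, whose integration yields the full stabilizer $\op{Sym}(q)$ at once, rather than assembling infinitesimal symmetries from a smoothly parametrized family $\phi_y$ as you do.
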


Notice that regularity assumption is essential, otherwise one gets counter-examples as discussed after
Theorem \ref{Thm3}.

 \begin{proof}
Consider the Lie equation $\mathfrak{Lie}(q)$ on the symmetry of $q\in\op{Sol}(\E)$. Its Cartan-Kuranishi
completion $\overline{\mathfrak{Lie}}(q)$ \cite{KLV,Se} is regular over an open dense set in $M$
(cf. \cite[Proposition 2]{K$_2$}). Thus in both cases -- finite type or analyticity -- the compatibility in an open
domain of regular points shall be checked on a finite number of steps, the maximum of which we denote by $k$.
Existence of a finite jet equivalence is given by the equality of scalar differential invariants to that jet-order
$k$, which holds by our assumption: $k$-jets of the transformations preserving the algebra of invariants
$\mathfrak{A}^k_q$ of order $\le k$ form the jet sub-pseudogroup $H^k\subset G^k$. Integrating the involutive equation
$\overline{\mathfrak{Lie}}(q)$ (which is possible by our assumptions) we obtain the space $H$ of transformations
preserving the solution/structure $q$. In other words, $H$ is the stabilizer $\op{Sym}(q)=\{g\in G:g\cdot q=q\}$.
By construction its orbits are level lines of the differential invariants $\mathfrak{A}_q$. Whence the claim.
 \end{proof}

For Cartan geometries \cite{Sh} we sketch another approach. Such geometric structures\footnote{Riemannian structure
is a particular case of the Cartan geometry.} $q$ are encoded as
a principal $P$-bundle $\mathcal{F}\to M$ with a canonical coframe -- Cartan connection
$\omega\in\Omega^1(\mathcal{F},\mathfrak{g})$, which is equivariant with respect to the right $P$-action.
The space of invariants $\hat{\mathfrak{A}}_q$ of the parallelism on $\mathcal{F}$ is formed by the structure functions
and their iterated derivatives with respect to the (dual) frame.
The maximal number of functional independent among them is the rank $r$ of the coframe $\{\oo^\a\}$.
Let $P'\subset P$ be the subgroup of the structure group that acts by symmetry on $\hat{\mathfrak{A}}_q$.
Absolute invariants of $q$ are $P$-invariant functions from $\hat{\mathfrak{A}}_q$, which thus descend to functions on
$M$. Thus $m=r-(\dim P-\dim P')$, and the conclusion follows from \cite[Theorem 14.26]{O}:
dimension of the symmetry orbit in $\mathcal{F}$ equals the corank of the structure functions of the coframe
$n+\dim P-r=n+\dim P'-m$ and since the vertical symmetry (stabilizer) is $P'$, the regular orbits of the symmetry group
on $M$ have dimensions $n-m$.

It shall be noted that this proof generalizes to the filtered analytic structures of infinite type
using the tower construction of Morimoto \cite{Mo} and the stabilization by involutivity.


\smallskip

\section{Classification in the presence of symmetry. Conclusion}
\label{S4}

In the presence of symmetry $H$ the classification of solutions/struc\-tures $q$ from $\E$ is modified as follows.
The differential invariants restricted to $M$ via $q$ descend to the (local or global depending on the setup) quotient
$N=M/H$. Then these scalar invariants form "coordinate set" and the equivalence proceeds as before,
but in addition one has to encode the $H$-principal bundle over $N$ and the homogeneous structure on $H$,
which both are finite type data.

For the model example of Riemannian metrics with the isometry group $H$ the bundle has the natural Ehresmann connection
-- the orthogonal complement to the fiber (for pseudo-Riemannian metrics one has to require non-degeneracy of
the metric restriction to the orbits), which is $H$-equivariant and so is a principal $H$-bundle connection.
In addition one has to add the metric on the Lie algebra $\mathfrak{h}=\op{Lie}(H)$, and these together with the basis
of scalar differential invariants of the base $(N,\bar g)$ defined as in Section \ref{S2} complete the classification.

\medskip

To finish the paper let us mention some open problems, which are left beyond the scope of this paper.

\smallskip

{\bf Question 1:} What are the differential syzygies of the generators of scalar differential invariants for Riemannian metrics as indicated in Section \ref{S2} (for $n=2$ computed in \cite{K$_1$}) and their Poincar\'e function?

\smallskip

{\bf Question 2:} What is the Poincar\'e function of the stratum of Riemannian metrics admitting the isometry algebra of $\dim=k$ orbit? This function should have a pole at $z=1$ of order $n-k$.

\smallskip

{\bf Question 3:} In which cases Theorem \ref{Thm4} characterizing existence of symmetries in the regular region hold in any region (like Riemannian vs. Lorenzian case for the metric structures)?


\end{document}